\documentclass[3p,times]{elsarticle}

\usepackage{amssymb}









\usepackage{amsmath,amsthm}
\usepackage{amssymb,latexsym}
\usepackage{enumerate}

\numberwithin{equation}{section}

\newtheorem{theorem}{Theorem}[section]

\newtheorem{proposition}[theorem]{Proposition}

\newtheorem{corollary}[theorem]{Corollary}
\newtheorem{The main theorem}[theorem]{The main theorem}

\theoremstyle{definition}




 \def\f{\mathcal F\text{-}}

\allowdisplaybreaks

\begin{document}

\begin{frontmatter}




\title{Local property of  maximal plurifinely plurisubharmonic functions}


\author[label1]{Nguyen Xuan Hong \fnref{label3}} 
 \fntext[label3]{This work is finished during the first author's post-doctoral fellowship of the Vietnam Institute for Advanced Study in Mathematics. He wishes to thank this institution for their kind hospitality and support.}
\address[label1]{Department of Mathematics, Hanoi National University of Education, 136 Xuan Thuy Street, Caugiay District, Hanoi, Vietnam}
 \ead{xuanhongdhsp@yahoo.com}
 
\author[label2]{Hoang Viet}\address[label2]{Vietnam Education Publishing House, Hanoi, Vietnam}  \ead{viet.veph@gmail.com}

\begin{abstract}
In this paper, we prove that a continuous $\f$plurisubharmonic functions defined in an $\f$open set in $\mathbb C^n$ is $\f$maximal if and only if it is $\f$locally $\f$maximal.  
\end{abstract}

\begin{keyword}

plurifine pluripotential theory \sep $\f$plurisubharmonic functions \sep $\f$maximal $\f$plurisubharmonic functions 


 \MSC[2010] 32U05 \sep 32U15
\end{keyword}

\end{frontmatter}



\section{Introduction}

The plurifine topology $\mathcal F$ on a Euclidean open set $\Omega$ of $\mathbb C^n$ is the smallest topology that makes all plurisubharmonic function on $\Omega$ continuous.   El Kadiri \cite{K03}  defined  in 2003 the notion of finely plurisubharmonic function in a plurifine open subset of $\mathbb C^n$ and studied properties of such functions.
These functions are introduced as plurifinely upper semicontinuous functions, of which the restriction to complex lines are finely subharmonic, where
a finely subharmonic function is defined on on a fine domain is a finely upper semi-continuous and satisfies an appropriate modification of the mean value inequality. In  \cite{MW10} El Marzguioui and Wiegerinck studied the continuity properties of the plurifinely plurisubharmonic functions. 
El Kadiri,  Fuglede and Wiegerinck \cite{KFW11} proved in 2011 the most important properties of the  plurifinely plurisubharmonic functions. 
El Kadiri and Wiegerinck  \cite{KW14}  defined in 2014 the Monge-Amp\`{e}re operator on finite   plurifinely plurisubharmonic functions in plurifinely open sets and show that it defines a positive measure. 
El Kadiri and M. Smit \cite{KS14} 
introduced and  studied in 2014 the notion of $\mathcal F$-maximal $\mathcal F$-plurisubharmonic functions, which extends the notion of maximal plurisubharmonic functions on a Euclidean domain to an $\mathcal F$-domain of $\mathbb C^n$ in a natural way.

There is  a natural questions that  whether  an $\f$locally $\f$maximal $\f$plurisubharmonic function on an $\f$open set $\Omega$ of $\mathbb C^n$ also $\f$maximal in $\Omega$ (see question 4.17 in \cite{KS14}).  
El Kadiri and M. Smit \cite{KS14} gave an example to show that this result is not valid when the function is not finite.
The aim of  this paper is to give a positive  answer for this question when the function is continuous.
Namely, we will prove the following theorem.\\

\noindent 
{\bf  Main theorem.} {\em 
Let $\Omega$ be an $\f$open set in $\mathbb C^n$. Assume that    $u$ is a continuous $\f$plurisubharmonic function in $\Omega$.  
Then $u$ is $\f$maximal  in $\Omega$ if and only if it is $\f$locally $\f$maximal   in $\Omega$.
} \\

Klimek \cite{Kl91} proved that  a locally bounded plurisubharmonic function $u$ defined in an Euclidean open set is maximal  if and only if  $(dd^cu)^n =0$, and therefore, the bounded  plurisubharmonic functions defined  in an Euclidean open set is maximal if and only if it is locally maximal. Notice that  for bounded $\f$plurisubharmonic functions $u$ defined in an $\f$open set $\Omega$, the complex Monge-Amp\`ere operator $(dd^c u)^n$ is $\f$locally defined in $\Omega$ (see \cite{KW14}), and therefore,  $(dd^cu)^n =0$ in $\Omega$ if and only if $u$ is $\f$locally $\f$maximal in $\Omega$ (see \cite{KS14}). Hence, it need to find an another approach in studying the local property of $\f$maximal $\f$plurisubharmonic functions.
Techniques used in the proof of the main theorem come from  \cite{HHo131} (also see \cite{HHo132}).

The paper is organized as follows. In section 2 we recall some notions of plurifine pluripotential theory. In Section 3 we prove  main theorem.

\section{Preliminaries}
Some elements of pluripotential theory (plurifine pluripotential theory) that will be used throughout the paper can be found in \cite{Bl2009}-\cite{W12}.\\

\noindent {\bf 2.1. The plurifine topology} 

The plurifine topology $\mathcal F$ on a
Euclidean open set $\Omega$ of $\mathbb C^n$  is the smallest topology that makes all plurisubharmonic functions on $\Omega$ continuous. 

Notions pertaining to the plurifine topology are indicated with the prefix $\mathcal F$ to distinguish them from notions pertaining to the Euclidean topology on $\mathbb C^n$.
For a set $A\subset \mathbb  C^n$ we write $\overline{A}$ for the closure of $A$ in the one point compactification
of $\mathbb C^n$, $\overline{A}^{\mathcal F}$ for the $\mathcal F$-closure of $A$ and $\partial _{\mathcal F}A$ for the $\mathcal F$-boundary of $A$.

A local  basis   is given by the sets 
$\mathbb B(z,r_z)\cap \{\varphi_z>0\},$
where $\mathbb B(z,r_z) \subset\Omega$ be Euclidean open balls of center  $z$, radius $r_z$; and $\varphi_z \in  PSH(\mathbb B(z,r_z))$ with $\varphi_z(z) > 0$. 

 The plurifine topology  is quasi-Lindel{\"o}f, that is, every arbitrary union of $\f$open sets is
the union of a countable subunion and a pluripolar set.\\
 
\noindent {\bf 2.2.   $\f$plurisubharmonic functions}

Let $\Omega$ be an $\f$open set in $\mathbb C^n$. A function $u : \Omega \to [-\infty,+\infty)$ is said to be $\f$plurisubharmonic function if it is $\f$upper semicontinuous, and for every complete line $l$ in $\mathbb C^n$, the restriction of $u$ to any $\f$component of the finely open subset $l\cap \Omega$ of $l$ is either finely subharmonic or $\equiv -\infty$. 
 
The set of all  $\f$plurisubharmonic functions in $\Omega$ is denoted by $\f  PSH(\Omega)$.

Let  $u\in \f PSH(\Omega)$.  We say that $u$  is $\f$maximal  in $\Omega$  if for every bounded $\f$open set $G$ of $\mathbb C^n$ with $\overline G\subset \Omega$, and for every function  $v\in \f PSH(G)$ that is bounded from above on $G$ and extends $\f$upper semicontinuously to $\overline{G}^{\mathcal F}$ with   $v\leq u$ on  $\partial_{\mathcal F} G$ implies   $v\leq u$ on $G$. 

The set of all $\f$maximal $\f$plurisubharmonic functions in $\Omega$ is denoted by $\f MPSH(\Omega)$. 

The function $u$ is called   locally  (resp. $\f$locally) $\f$maximal in $\Omega$ if for every $z\in \mathbb C^n$ there exists an Euclidean open (resp. $\f$open) neighbourhood $V_z\subset \mathbb C^n$ of $z$ such that $u|_{V_z\cap \Omega}$ is $\f$maximal   on $V_z\cap \Omega$.

\section{Proof of main theorem}

First we give the following.

\begin{proposition}\label{pr1}
Let $\Omega$ be an $\f$open set in $\mathbb C^n$. Assume that    $u$ is a bounded $\f$plurisubharmonic function in $\Omega$.   
Then, the following conditions are equivalent 

(a) $u\in \f MPSH(\Omega)$.

(b) $u+g\in \f MPSH(\Omega)$, for every pluriharmonic functions $g$ in $\mathbb C^n$.

(c) For every $v\in \f PSH(\Omega)$ and for every $\f$open set $G\subset \Omega$ with $\overline G\subset \Omega$ we have 
$$\sup_{G} (v-u) 
\leq \sup_{\Omega \backslash G} (v-u).$$  
\end{proposition}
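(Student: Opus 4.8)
The plan is to prove (b)$\Rightarrow$(a)$\Rightarrow$(c)$\Rightarrow$(a) together with (a)$\Rightarrow$(b); since (b)$\Rightarrow$(a) is immediate (apply (b) with $g\equiv0$), the real content is the three remaining implications, and the tools I will rely on are the gluing principle for $\f$plurisubharmonic functions and the maximum principle for $\f$plurisubharmonic functions on relatively compact $\f$open sets (the latter reducing, along complex lines, to the fine maximum principle for finely subharmonic functions). For (a)$\Rightarrow$(b), fix a pluriharmonic $g$ on $\mathbb C^n$; then $g$ and $-g$ are continuous and plurisubharmonic. Given a bounded $\f$open $G$ with $\overline G\subset\Omega$ and $v\in\f PSH(G)$ that is bounded above, extends $\f$upper semicontinuously to $\overline G^{\mathcal F}$ and satisfies $v\le u+g$ on $\partial_{\mathcal F}G$, the function $v-g$ again lies in $\f PSH(G)$, is bounded above (since $g$ is bounded on the compact set $\overline G$), extends $\f$upper semicontinuously to $\overline G^{\mathcal F}$, and satisfies $v-g\le u$ on $\partial_{\mathcal F}G$; $\f$maximality of $u$ then gives $v-g\le u$, i.e. $v\le u+g$, on $G$. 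Hence $u+g\in\f MPSH(\Omega)$.

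For (a)$\Rightarrow$(c), fix $v\in\f PSH(\Omega)$ and $G$ as in (c), and put $M:=\sup_{\Omega\setminus G}(v-u)$; we may assume $M<+\infty$, for otherwise there is nothing to prove. Since $u$ is bounded, $v$ is bounded above on $\Omega\setminus G$, and then the maximum principle shows $v$ is bounded above on $G$ as well (and, in the degenerate case $M=-\infty$, that $v\equiv-\infty$ on $G$, whence the conclusion is trivial). Thus $v|_G-M\in\f PSH(G)$ is bounded above, extends $\f$upper semicontinuously to $\overline G^{\mathcal F}$ --- by $v$ itself, since $\overline G^{\mathcal F}\subset\overline G\subset\Omega$ --- and on $\partial_{\mathcal F}G\subset\Omega\setminus G$ satisfies $v-M\le u$. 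Applying the definition of $\f$maximality of $u$ gives $v-M\le u$ on $G$, which is exactly $\sup_G(v-u)\le M$.

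For (c)$\Rightarrow$(a), let $G$ be a bounded $\f$open set with $\overline G\subset\Omega$ and let $v\in\f PSH(G)$ be bounded above, extend $\f$upper semicontinuously to $\overline G^{\mathcal F}$, and satisfy $v\le u$ on $\partial_{\mathcal F}G$. Set $\tilde v:=\max(v,u)$ on $G$ and $\tilde v:=u$ on $\Omega\setminus G$. The hypothesis on $\partial_{\mathcal F}G$ makes $\tilde v$ $\f$upper semicontinuous on $\Omega$, and the gluing principle yields $\tilde v\in\f PSH(\Omega)$; moreover $\tilde v\ge u$ on $\Omega$, $\tilde v=u$ on $\Omega\setminus G$, and $\Omega\setminus G\neq\emptyset$ (using that $\overline G\subset\Omega$ and that $\mathbb C^n$ is $\f$connected). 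Applying (c) to $\tilde v$ and $G$ gives $\sup_G(\tilde v-u)\le\sup_{\Omega\setminus G}(\tilde v-u)=0$, so $\tilde v=u$ and hence $v\le u$ on $G$; that is, $u\in\f MPSH(\Omega)$.

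I expect the main obstacle to be the implication (a)$\Rightarrow$(c): the definition of $\f$maximality can only be applied to competitors bounded above on $G$, so one first has to upgrade ``$v$ bounded above on $\Omega\setminus G$'' to ``$v$ bounded above on $G$'', which is exactly the maximum principle for $\f$plurisubharmonic functions on relatively compact $\f$open sets; establishing that principle (or pinning it down in the cited literature) is the delicate point. The rest is routine bookkeeping: checking that the $\f$upper semicontinuous extension required in the definition of maximality may be taken to be $v$ restricted to $\overline G^{\mathcal F}$, and that the gluing construction genuinely produces an $\f$plurisubharmonic function across $\partial_{\mathcal F}G$.
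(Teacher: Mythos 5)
Your argument is correct and is essentially the paper's own proof: (a)$\Leftrightarrow$(b) by adding and subtracting the pluriharmonic function, (a)$\Rightarrow$(c) by subtracting $M=\sup_{\Omega\setminus G}(v-u)$ and applying the definition of $\f$maximality, and (c)$\Rightarrow$(a) by gluing $\max(v,u)$ on $G$ with $u$ on $\Omega\setminus G$ via Proposition 2.3 of \cite{KS14}, exactly as in the text. The only place where you go beyond the paper is the bound from above of $v$ on $G$ required before the definition of $\f$maximality can be invoked in (a)$\Rightarrow$(c); the paper passes over this silently, and the maximum principle you appeal to can in fact be discharged with the tools already in play: setting $C:=M+\sup_\Omega u$, glue $\max(v,C)$ on $G$ with the constant $C$ on $\mathbb C^n\setminus G$ (the fine boundary condition holds since $\partial_{\mathcal F}G\subset\overline G\subset\Omega$ and $v\le C$ on $\Omega\setminus G$), so that Proposition 2.3 of \cite{KS14} together with Theorem 2.14 of \cite{KFW11} produces a plurisubharmonic function on all of $\mathbb C^n$, which is bounded above on the compact set $\overline G$ and hence shows $v$ is bounded above on $G$ (and the Liouville property even gives $v\le C$ there). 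With that one step written out, or with a precise citation for it, your proof is complete.
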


\begin{proof} 
(a) $\Leftrightarrow$ (b) is obvious.

(a) $\Rightarrow$ (c). Let $v\in \f PSH(\Omega)$ and let $G$ be an $\f$open set with $\overline G\subset \Omega$. Put 
$$M:= \sup_{\Omega \backslash G} (v-u).$$
Without loss of generality we can assume that $M<+\infty$. Then $v-M \leq u$ in $\Omega \backslash G$. In particular,   $v-M\leq u$ on  $\partial_{\mathcal F} G$, and hence, 
$v-M \leq u$ in $G$. Therefore,
$$\sup_{G} (v-u) 
\leq M= \sup_{\Omega \backslash G} (v-u).$$ 

(c) $\Rightarrow$ (a).
Let $G$ be an $\f$open set in  $\mathbb C^n$ with $\overline G\subset \Omega$, and let  $v\in \f PSH(G)$ such that $v$ is  bounded from above  on $G$, extends $\f$upper semicontinuously to $\overline{G}^{\mathcal F}$, and    $v\leq u$ on  $\partial_{\mathcal F} G$.
Put 
$$\varphi := \begin{cases} 
\max(v,u) & \text{ on } G,
\\ u & \text{ on } \Omega \backslash G.
\end{cases}$$
Thanks to Proposition 2.3 in \cite{KS14} we have  $\varphi \in \f PSH(\Omega)$. It follows that 
$$\sup_{G} (v-u) 
\leq \sup_{G} (\varphi-u) 
\leq  \sup_{\Omega \backslash G} (\varphi-u)=0.$$
Hence, $v\leq u$ in $G$. The proof is complete.  
\end{proof}

\begin{proposition}\label{pr3}
Let $\Omega$ be  $\f$open sets in $\mathbb C^n$. Assume that     $u$ is   bounded,   locally $\f$maximal,  $\f$plurisubharmonic function in $\Omega$. Then  $u$ is $\f$maximal   in $\Omega$.
\end{proposition}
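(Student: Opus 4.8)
The plan is to verify the definition of $\f$maximality for $u$ on $\Omega$ (equivalently, condition (c) of Proposition~\ref{pr1}), the mechanism being to transfer the local hypothesis on $u$ into the vanishing of the plurifine complex Monge--Amp\`ere operator and then to run a comparison principle. So let $G\subset\mathbb C^n$ be a bounded $\f$open set with $\overline G\subset\Omega$, and let $v\in\f PSH(G)$ be bounded above on $G$, extend $\f$upper semicontinuously to $\overline{G}^{\mathcal F}$, and satisfy $v\le u$ on $\partial_{\mathcal F}G$; we must show that $v\le u$ on $G$. Replacing $v$ by $\max(v,\inf_\Omega u-1)$ --- which changes neither the hypotheses nor the conclusion --- I may assume $v$ is bounded.

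First I would localize. Since $\overline G$ is a Euclidean compact subset of the $\f$open set $\Omega$ and $u$ is locally $\f$maximal, there are finitely many Euclidean balls $B_1,\dots,B_N$ covering $\overline G$ with $u|_{B_j\cap\Omega}\in\f MPSH(B_j\cap\Omega)$ for every $j$. By \cite{KW14} the operator $(dd^cu)^n$ is well defined on each $B_j\cap\Omega$, and since $u$ is $\f$maximal there (hence $\f$locally $\f$maximal), \cite{KS14} gives $(dd^cu)^n=0$ on $B_j\cap\Omega$. These local measures are consistent on overlaps, so $(dd^cu)^n=0$ on the $\f$open set $\widetilde\Omega:=(\bigcup_{j=1}^N B_j)\cap\Omega$, and $G\subset\overline G\subset\widetilde\Omega\subset\Omega$. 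Restricting to $G$, we now have bounded $\f$plurisubharmonic functions $u$ and $v$ with $(dd^cv)^n\ge 0=(dd^cu)^n$ on $G$ and $v\le u$ on $\partial_{\mathcal F}G$.

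The last step is the decisive one and is where I expect the main obstacle to lie: a comparison principle on the $\f$open set $G$, to the effect that $(dd^cv)^n\ge(dd^cu)^n$ on $G$ together with $v\le u$ on $\partial_{\mathcal F}G$ (interpreted through the $\f$upper semicontinuous extensions of $u$ and $v$ to $\overline{G}^{\mathcal F}$) forces $v\le u$ on $G$ --- which completes the proof. This is where the techniques of \cite{HHo131} (see also \cite{HHo132}) are needed: one cannot simply quote the classical Bedford--Taylor comparison theorem, since $G$ is merely $\f$open, its $\f$boundary may be highly irregular, and $(dd^cu)^n$ is only $\f$locally defined; instead I would exhaust $G$ from inside by $\f$open subsets on which a comparison estimate is available, keep track of the boundary contribution of $v-u$ along this exhaustion by means of the hypotheses on $\partial_{\mathcal F}G$, and pass to the limit.

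Finally, I would record why a purely elementary patching does not close the argument. If one arranges in addition that the half-balls $\tfrac12 B_j$ cover $\overline G$, then, writing $w$ for $\max(v,u)$ on $G$ extended by $u$ on $\Omega\setminus G$ (which is $\f$plurisubharmonic on $\Omega$ by Proposition~2.3 in \cite{KS14}), the $\f$maximality of $u$ on $B_j\cap\Omega$ yields only the local estimates $\sup_{\tfrac12 B_j\cap G}(w-u)\le\sup_{(B_j\setminus\tfrac12 B_j)\cap\Omega}(w-u)$; but on the annular sets $(B_j\setminus\tfrac12 B_j)\cap G$ there is no a priori bound for $w-u$, so the natural chaining of these inequalities need not terminate. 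It is precisely the detour through $(dd^cu)^n=0$ and the comparison principle that removes this obstruction.
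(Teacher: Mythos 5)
There is a genuine gap at the step you yourself flag as decisive. Your reduction ``locally $\f$maximal $\Rightarrow (dd^cu)^n=0$ on a neighbourhood of $\overline G$'' is fine (this equivalence for bounded $\f$plurisubharmonic functions is in \cite{KW14}, \cite{KS14}), but it merely restates the hypothesis; all the content is then pushed into the asserted comparison principle on the $\f$open set $G$: that $(dd^cv)^n\ge(dd^cu)^n$ on $G$ together with $v\le u$ on $\partial_{\mathcal F}G$ forces $v\le u$ on $G$. No such comparison theorem for the plurifine Monge--Amp\`ere operator on general $\f$open sets can be cited, and your sketch (exhaust $G$ from inside, track the boundary term, pass to the limit) does not indicate how to carry out any of these steps in the plurifine setting, where $G$ has no exhaustion by relatively compact Euclidean open sets and $(dd^cu)^n$ is only $\f$locally defined. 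In fact, such a comparison principle for bounded $\f$plurisubharmonic functions would immediately yield ``$(dd^cu)^n=0\Rightarrow u\in\f MPSH(\Omega)$'' for all bounded $u$, i.e.\ it would settle Question 4.17 of \cite{KS14} in greater generality than this paper does (the paper needs continuity for its Main Theorem, and Euclidean-local maximality for this proposition, and only obtains the equivalence with $(dd^cu)^n=0$ as a \emph{corollary} of the Main Theorem). The introduction makes precisely this point: since $(dd^cu)^n=0$ is equivalent only to $\f$\emph{local} $\f$maximality, ``it need to find an another approach''; so your route begs the question rather than closing it.

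The paper's own proof avoids the Monge--Amp\`ere operator entirely. It fixes $v$, perturbs to $v_\varepsilon(z)=v(z)+\varepsilon|z|^2$, takes a maximizing sequence $p_j\to p\in\overline G$ for $v_\varepsilon-u$, and on a single ball $\mathbb B(p,3r)$ where $u$ is $\f$maximal it applies Proposition~\ref{pr1}(c) to $u+g_{\varepsilon,j}$ with the pluriharmonic weights $g_{\varepsilon,j}(z)=\varepsilon|z-p_j|^2-\varepsilon|z|^2$; the strict gain $\varepsilon r^2$ on $G\setminus\mathbb B(p,2r)$ shows the supremum of $v_\varepsilon-u$ over $\Omega\cap\mathbb B(0,R)$ is attained on $(\Omega\cap\mathbb B(0,R))\setminus G$, and letting $\varepsilon\searrow0$ gives condition (c) of Proposition~\ref{pr1} for $u$ on $\Omega$. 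Your closing remark about why naive patching of the local maximality estimates does not chain up is a fair observation, but the cure is this quantitative pluriharmonic-perturbation argument, not a plurifine comparison principle.
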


\begin{proof} 
Let $v$ be a $\f$plurisubharmonic function in $\Omega$ and let $G$ is a bounded $\f$open set  in $\mathbb C^n$ such that  $\overline G\subset \Omega$.  
Choose $R>0$ such that $\overline G\subset \mathbb B(0,R)$.
Let $\varepsilon>0$. Put $v_\varepsilon(z):=v(z)+\varepsilon |z|^2$, $z\in \Omega$. 
Choose $\{p_j \}\subset   G$ such that  $p_j\to p \in \overline{G}$ and 
$$\lim_{j\to+\infty} [v_\varepsilon(p_j)- u(p_j)] =\sup_{G} (v_\varepsilon-u).$$ 
Let $r>0$ such that $\mathbb B(p,3r)\Subset \mathbb B(0,R)$ and  $u$ is $\f$maximal $\f$plurisubharmonic function in  $\mathbb B(p,3r)\cap \Omega$. Without loss of generality we can assume that $\{p_j\} \subset \mathbb B(p,r)$. Put
$$g_{\varepsilon,j} (z) := \varepsilon |z-p_j|^2 -\varepsilon |z|^2, \ z\in \mathbb C^n.$$
It is clear that $g_{\varepsilon,j}$ are pluriharmonic functions in $\mathbb C^n$. 
Following  Proposition \ref{pr1} we have  $u+g_{\varepsilon,j}$ are $\f$maximal $\f$plurisubharmonic functions in $\mathbb B(p,3r)\cap \Omega$, and hence, again by Proposition \ref{pr1}, we get
\begin{align*}
\sup_{G} (v_\varepsilon-u) 
& =   \lim_{j\to+\infty} [v(p_j) - u(p_j) -g_{\varepsilon,j}(p_j) ]  
\\ & \leq  \sup_{G\cap \mathbb B(p,2r)} (v- u -g_{\varepsilon,j}) 
\\&  \leq  \sup_{(\Omega  \cap \mathbb B(p,3r) ) \backslash (G\cap \mathbb B(p,2r))} (v- u -g_{\varepsilon,j}) 
\\& \leq 
\max
\left(
\sup_{(\Omega \cap \mathbb B(p,3r)) \backslash G} (v- u -g_{\varepsilon,j}) ,
\sup_{G\backslash \mathbb B(p,2r)} (v- u -g_{\varepsilon,j}) 
\right) 
\\& \leq 
\max
\left(
\sup_{(\Omega \cap \mathbb B(p,3r))  \backslash G} (v_\varepsilon- u) ,
\sup_{G\backslash \mathbb B(p,2r)} (v_\varepsilon- u -\varepsilon r^2) 
\right) .
\end{align*}
Therefore, 
\begin{align*}
\sup_{\Omega \cap \mathbb B(0,R)} (v_\varepsilon- u) 
&= \max( \sup_{G \cap \mathbb B(0,R)} (v_\varepsilon-u) , \sup_{(\Omega \cap \mathbb B(0,R)) \backslash G} (v_\varepsilon- u) )
\\& \leq   \max( \sup_{\Omega \cap \mathbb B(p,3r)} (v_\varepsilon- u) -\varepsilon r^2 , \sup_{(\Omega \cap \mathbb B(0,R)) \backslash G} (v_\varepsilon- u) )
\\& \leq   \max(\sup_{\Omega \cap \mathbb B(0,R)} (v_\varepsilon- u)  -\varepsilon r^2 , \sup_{(\Omega \cap \mathbb B(0,R)) \backslash G} (v_\varepsilon- u) ).
\end{align*}
It follows that
\begin{align*}
\sup_{\Omega \cap \mathbb B(0,R)} (v_\varepsilon- u)  =   \sup_{(\Omega \cap \mathbb B(0,R)) \backslash G} (v_\varepsilon- u)  .
\end{align*}
Hence,
\begin{align*}
\sup_{G} (v-u)&  \leq \sup_{G} (v_\varepsilon-u) 
 \leq  \sup_{\Omega \cap \mathbb B(0,R)} (v_\varepsilon- u) -\varepsilon r^2 
\\&= \sup_{( \Omega \cap \mathbb B(0,R))  \backslash G} (v_\varepsilon-u) -\varepsilon r^2 
\\& <\sup_{( \Omega \cap \mathbb B(0,R))  \backslash G} (v-u) +\varepsilon R^2  
 \leq \sup_{\Omega\backslash G} (v -u) + \varepsilon R^2.
\end{align*}
Let $\varepsilon \searrow 0$ we obtain that 
$$\sup_{G} (v-u) 
\leq \sup_{\Omega \backslash G} (v-u).$$
Thanks to  Proposition \ref{pr1} this  implies that $u$ is $\f$maximal $\f$plurisubharmonic function in $\Omega$. The proof is complete.
\end{proof}





We now give the proof of main theorem.

\begin{proof}[Proof of main theorem]
The proof of the necessity is obvious. We now give the proof of the sufficiency. By Proposition \ref{pr3} it remains to prove that $u$ is locally $\f$maximal  in $\Omega$, and hence, without loss of generality  we can assume that $\Omega\subset \mathbb B(0,R)$. 
Let $G$ is a bounded $\f$open set  in $\mathbb C^n$ with $\overline G\subset \Omega$, and let  $v\in \f PSH(G)$ such that $v$ is  bounded from above  on $G$, extends $\f$upper semicontinuously to $\overline{G}^{\mathcal F}$ and    $v\leq u$ on  $\partial_{\mathcal F} G$. 
Let $\varepsilon>0$. 
Put 
$$v_\varepsilon(z) := \begin{cases} 
\max(v(z) +\varepsilon(|z|^2 -R^2) ,u(z)) & \text{ if  } z\in G,
\\ u(z)  & \text{ if }z\in \Omega \backslash G.
\end{cases}$$
Thanks to Proposition 2.3 in \cite{KS14} we have  $v_\varepsilon \in \f PSH(\Omega)$. 
Assume that 
$$\sup_{G} (v_\varepsilon-u) > \delta_0>0.$$
Choose $\{p_j \}\subset   G$ such that  $p_j\to p \in \overline{G}$, $v_\varepsilon(p_j)- u(p_j) > \delta_0$ for all $j\geq 1$
and 
$$\lim_{j\to+\infty} [v_\varepsilon(p_j)- u(p_j)] 
=\sup_{G} (v_\varepsilon-u)
=\sup_{\Omega} (v_\varepsilon-u).$$ 

First, we claim that 
$$\limsup_{j\to+\infty} v_\varepsilon (p_j)\leq v_\varepsilon (p).$$ 
Indeed, let $\delta\in (0,\delta_0)$.
Since $u$ is continuous in $\overline G$ so there exist a smooth functions $f$ defined in $\mathbb C^n$ such that 
$$u \leq f  \leq u  +\delta \text{ on }  \overline{G}.$$
Choose $\varphi, \psi \in PSH(\mathbb B(0,R)) \cap \mathcal C^\infty (\mathbb B(0,R))$ such that 
$$f=\varphi -\psi \text{ on } \mathbb B(0,R).$$
Put 
$$w := \begin{cases} 
\max(v_\varepsilon +\psi ,f+\psi) & \text{ in  } G,
\\ f +\psi  & \text{ in } \mathbb B(0,R) \backslash G.
\end{cases}$$
Since $v_\varepsilon \leq u\leq f$ on $ \partial _{\mathcal F} G$, from Proposition 2.3 in \cite{KS14} we have  $w\in \f PSH(\mathbb B(0,R))$, and hence, by Proposition 2.14 in \cite{KFW11} we have 
$w\in PSH(\mathbb B(0,R))$.
Therefore, $w$ be upper semicontinuous function in $\mathbb B(0,R)$. Since 
$$v_\varepsilon(p_j) -f(p_j) \geq v_\varepsilon(p_j) -u(p_j) -\delta >0,
$$
we have $w(p_j)=v_\varepsilon(p_j) +\psi(p_j)$. It follows that 
\begin{align*}
\limsup_{j\to+\infty} v_\varepsilon (p_j) 
&= \limsup_{j\to +\infty} [w(p_j) -\psi(p_j)] 
\\& \leq w(p)-\psi(p)
\leq \max(v_\varepsilon(p), f(p))
\\& \leq \max(v_\varepsilon(p), u(p) +\delta).
\end{align*}
Letting  $\delta \searrow 0$ we obtain that 
$$\limsup_{j\to+\infty} v_\varepsilon (p_j)
\leq  \max(v_\varepsilon(p), u(p) )= v_\varepsilon (p).$$
This proves the claim.

Now, since $u$ is continuous function, we  have 
\begin{align*}
v_\varepsilon(p)-u(p) 
&\leq \sup _\Omega (v_\varepsilon -u)
=\limsup_{j\to+\infty} [v_\varepsilon(p_j) -u(p_j)] 
\\& = \limsup_{j\to+\infty} v_\varepsilon (p_j) -u(p) 
\\&\leq v_\varepsilon(p)-u(p) .
\end{align*}
Therefore, 
$$v_\varepsilon(p)- u(p) 
=\sup_{G} (v_\varepsilon-u)
=\sup_{\Omega} (v_\varepsilon-u)>0.$$ 
It follows that $p \in \Omega \cap \{v_\varepsilon > u\}$.
Thanks to Theorem 3.1 in \cite{MW10} this  implies that $\Omega\cap \{v_\varepsilon>u\}$ is $\f$open neighbourhood of $p$.

Let $r>0$ and let $\phi \in PSH(\mathbb B(p,2r))$ such that $\phi(p)=0$, $\mathbb B(p,2r)\cap \{\phi>-2\} \subset \Omega\cap \{v_\varepsilon>u\}$ and  $u$ is $\f$maximal $\f$plurisubharmonic function in  $\mathbb B(p,2r)\cap \{\phi>-2\}$. 
Without loss of generality we can assume that $\phi$ is bounded on $\mathbb B(p,2r)$.
Put
$$g (z) := \varepsilon |z-p|^2 -\varepsilon (|z|^2-R^2), \ z\in \mathbb C^n.$$
Since $g$ is a pluriharmonic functions in $\mathbb C^n$, by Proposition \ref{pr1} we have  $u+g$ is $\f$maximal $\f$plurisubharmonic function in $\mathbb B(p,2r)\cap \{\phi>-2\}$. Let $\delta>0$. Again by Proposition  \ref{pr1}, we get
\begin{align*}
 \sup_{\Omega} (v_\varepsilon-u)  
 &= v_\varepsilon(p)-u(p) 
\\ & =   v(p)+ \delta \phi (p) - u(p) -g(p)  
\\&  \leq  \sup_{(\mathbb B(p,2r)\cap \{\phi>-2\}) \backslash (\mathbb B(p,r)\cap \{\phi>-1\})} (v + \delta \phi - u -g) 
\\ &\leq \max
\left( 
 \sup_{\mathbb B(p,2r)\cap \{-2 < \phi \leq -1\}} (v + \delta \phi - u -g) ,
 \sup_{(\mathbb B(p,2r)\cap \{\phi>-2\}) \backslash \mathbb B(p,r)} (v + \delta \phi - u -g) 
\right) 
\\ &\leq \max
\left( 
 \sup_{\mathbb B(p,2r)\cap \{-2 < \phi \leq -1\}} (v _\varepsilon - \delta   - u ) ,
 \sup_{(\mathbb B(p,2r)\cap \{\phi>-2\}) \backslash \mathbb B(p,r)} (v _\varepsilon + \delta \phi - u -\varepsilon r^2) 
\right)  
\\ &\leq \max
\left( 
 \sup_{\Omega} (v _\varepsilon    - u ) - \delta ,
 \sup_{\mathbb B(p,2r)\cap \{\phi>-2\}} (v _\varepsilon + \delta \phi - u -\varepsilon r^2) 
\right)  
\end{align*}
It implies that 
\begin{align*}
 \sup_{\Omega} (v_\varepsilon-u)   
& \leq  
 \sup_{\mathbb B(p,2r)\cap \{\phi>-2\}} (v _\varepsilon + \delta \phi - u -\varepsilon r^2) 
 \\&  \leq  \sup_{\Omega} (v_\varepsilon-u)   + 
 \delta \sup_{\mathbb B(p,2r)}  \phi  -\varepsilon r^2.
\end{align*}
Let $\delta \searrow 0$, we obtain 
\begin{align*}
 \sup_{\Omega} (v_\varepsilon-u)   
   \leq  \sup_{\Omega} (v_\varepsilon-u)    -\varepsilon r^2 .
\end{align*}
This is impossible. Thus,  
$$\sup_{G} (v_\varepsilon-u)   \leq 0.$$
Therefore, 
$$\sup_{G} (v-u) 
\leq \sup_{G} (v_\varepsilon-u)  +\varepsilon R^2 \leq  \varepsilon R^2.$$
Letting $\varepsilon \searrow 0$ we obtain that 
$$\sup_{G} (v-u) 
\leq 0.$$
It follows  that $v\leq u$ in $G$. Hence, $u$ is $\f$maximal $\f$plurisubharmonic function in $\Omega$. The proof is complete.
\end{proof}

Main Theorem  together with Proposition 2.5 in \cite{KS14} and Theorem 4.15 in \cite{KS14} gives

\begin{corollary}
Let $\Omega$ be an $\f$open set in $\mathbb C^n$. Assume that   $u$ is a continuous $\f$plurisubharmonic function in $\Omega$.  
Then $u$ is $\f$maximal  in $\Omega$ if and only if $(dd^c u)^n=0$ on $\Omega$.
\end{corollary}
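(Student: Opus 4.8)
The plan is to prove the corollary as a direct consequence of the Main Theorem together with the two cited results from \cite{KS14}. The strategy rests on reducing the global statement ``$u$ is $\f$maximal in $\Omega$ iff $(dd^cu)^n=0$ in $\Omega$'' to a purely $\f$local statement, because both the notion of being $\f$locally $\f$maximal and the vanishing of the Monge–Amp\`ere measure are genuinely $\f$local notions; the Main Theorem is precisely the bridge that removes the word ``locally'' on the $\f$maximal side.

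First I would argue the direction ``$\f$maximal $\Rightarrow (dd^cu)^n=0$''. By the Main Theorem, if $u$ is $\f$maximal in $\Omega$ then it is $\f$locally $\f$maximal in $\Omega$; thus every point of $\mathbb C^n$ has an $\f$open neighbourhood $V_z$ on which $u|_{V_z\cap\Omega}$ is $\f$maximal. On such an $\f$open piece $u$ is continuous, hence bounded, so the Monge–Amp\`ere operator $(dd^cu)^n$ is (Euclidean-)locally, and therefore $\f$locally, defined there as in \cite{KW14}; applying Theorem 4.15 of \cite{KS14} — which for \emph{bounded} (here continuous, hence locally bounded) $\f$plurisubharmonic functions characterizes $\f$maximality on an $\f$open set by the vanishing of $(dd^cu)^n$ — gives $(dd^cu)^n=0$ on $V_z\cap\Omega$. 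Since the $V_z\cap\Omega$ cover $\Omega$ and the vanishing of a (positive) measure is a local property with respect to any topology finer than the one in which the measure is defined, we conclude $(dd^cu)^n=0$ on all of $\Omega$.

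Conversely, for ``$(dd^cu)^n=0 \Rightarrow \f$maximal'', I would again pass to the $\f$local picture. Fix $z\in\mathbb C^n$ and choose, using the local basis of the plurifine topology, an $\f$open neighbourhood $V_z$ with $\overline{V_z}^{\mathcal F}$ (or a suitable shrinking) contained in a region where $u$ is bounded; on $V_z\cap\Omega$ we have $(dd^cu)^n=0$, so Theorem 4.15 of \cite{KS14} yields that $u$ is $\f$maximal on $V_z\cap\Omega$. Hence $u$ is $\f$locally $\f$maximal in $\Omega$, and the Main Theorem upgrades this to $\f$maximality in $\Omega$. The role of Proposition 2.5 in \cite{KS14} here is the routine but necessary bookkeeping step: it records that $\f$maximality is preserved under the restriction/extension manipulations implicit in cutting $\Omega$ into the $\f$open pieces $V_z\cap\Omega$ and in recognizing local boundedness, so that Theorem 4.15 is actually applicable on each piece.

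The main obstacle, and the only place where care is genuinely needed, is the matching of hypotheses across the cited theorems: Theorem 4.15 of \cite{KS14} and the local definability of $(dd^cu)^n$ in \cite{KW14} are stated for \emph{bounded} (or finite) $\f$plurisubharmonic functions, whereas our $u$ is only \emph{continuous} on $\Omega$, hence merely locally bounded. The resolution is exactly the localization already carried out in the proof of the Main Theorem: on each $\f$open piece $V_z\cap\Omega$ sitting inside a Euclidean ball where $u$ is continuous on the closure, $u$ is bounded, so all cited results apply verbatim; the passage from these $\f$local conclusions back to $\Omega$ costs nothing because — by the very theorem we are invoking — $\f$local $\f$maximality is equivalent to $\f$maximality, and the vanishing of $(dd^cu)^n$ is intrinsically $\f$local. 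No further estimates are required; the corollary is a formal synthesis of the three inputs.
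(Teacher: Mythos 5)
Your proposal is correct and follows essentially the route the paper itself indicates: the corollary is obtained by combining the Main Theorem with Proposition 2.5 and Theorem 4.15 of \cite{KS14}, localizing to $\f$open pieces on which $u$ is bounded so that the cited results (stated for bounded functions) apply, and using that $(dd^cu)^n$ is $\f$locally defined. The only phrase to adjust is ``continuous, hence bounded'' on $V_z\cap\Omega$ --- continuity alone does not give boundedness there --- but you repair this yourself in the final paragraph by shrinking to pieces inside Euclidean balls, with Proposition 2.5 of \cite{KS14} guaranteeing that $\f$maximality survives the restriction.
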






\bibliographystyle{elsarticle-num}
\bibliography{<your-bib-database>}



\end{document}